\newtheorem{theorem}{Theorem}[section]
\newtheorem{proposition}[theorem]{Proposition}
\newtheorem{lemma}[theorem]{Lemma}
\newtheorem{conjecture}[theorem]{Conjecture}
\theoremstyle{definition}
\newtheorem{definition}[theorem]{Definition}
\newtheorem{remark}[theorem]{Remark}
\newtheorem{problem}[theorem]{Problem}
\newtheorem{notation}[theorem]{Notation}
\theoremstyle{theorem}
\title[Nonexistence of Stein structures on 4-manifolds]{Nonexistence of Stein structures on 4-manifolds and maximal Thurston-Bennequin numbers}
\author[Kouichi Yasui]{Kouichi Yasui}
\thanks{The author was partially supported by JSPS KAKENHI Grant Number 25800048.}
\date{September 9, 2015. Revised: November 3, 2015.}
\keywords{4-manifolds; Legendrian knots; Stein manifolds; maximal Thurston-Bennequin numbers; rulings}
\address{Department~of~Mathematics, Graduate School~of~Science, Hiroshima~University, 1-3-1 Kagamiyama, Higashi-Hiroshima, 739-8526, Japan}
\email{kyasui@hiroshima-u.ac.jp}
\begin{document}
%\maketitle

\begin{abstract}
For a 4-manifold represented by a framed knot in $S^3$, it has been well known that the 4-manifold admits a Stein structure if the framing is less than the maximal Thurston-Bennequin number of the knot. In this paper, we prove either the converse of this fact is false or there exists a compact contractible oriented smooth 4-manifold (with Stein fillable boundary) admitting no Stein structure. Note that an exotic smooth structure on $S^4$ exists if and only if there exists a compact contractible oriented smooth 4-manifold with $S^3$ boundary admitting no Stein structure. 
\end{abstract}

\maketitle

\section{Introduction}\label{sec:intro}
Stein 4-manifolds have many applications to low-dimensional topology (cf.\ \cite{GS, OS1}), and thus characterizing a 4-manifold admitting a Stein structure is an important problem. %The main purpose of this paper is to show that one of two potential characterization is false. %A handlebody characterization of Stein 4-manifolds is well known (\cite{E1, G1}), but this is not suitable for proving nonexistence of Stein structures.%%%%%% 
This problem is related to the 4-dimensional smooth Poincar\'{e} conjecture. Indeed, every homotopy $S^4$ is diffeomorphic to $S^4$ if and only if every compact contractible oriented smooth 4-manifold with $S^3$ boundary admits a Stein structure (cf.\ Remark~\ref{rem:SPC4}). Due to this fact and usefulness of Stein corks (e.g.\ \cite{AY1}), the following generalization is well known to experts. 

\begin{problem}\label{intro:prob:contractible} Does every compact contractible oriented smooth 4-manifold admit a Stein structure?
\end{problem} 
The following case is particularly interesting, since $S^3$ is Stein fillable. 
Note that a closed oriented 3-manifold is called Stein fillable if it is diffeomorphic to the boundary of a compact Stein 4-manifold.

\begin{problem}\label{intro:prob:contra_fillable} Does every compact contractible oriented smooth 4-manifold with Stein fillable boundary admit a Stein structure?
\end{problem}

The main purpose of this paper is to relate this problem with another natural problem as a potential approach to this problem. We recall the latter problem. For a 4-manifold represented by a framed knot in $S^3$ (i.e.\ an \textit{oriented} 4-manifold obtained from $D^4$ by attaching a 2-handle along a framed knot, where the orientation is the one induced from $D^4$.), it has been well known that the 4-manifold admits a Stein structure if the framing is less than the maximal Thurston-Bennequin number of the knot (\cite{E2, G1}). However, the converse of this fact is an open problem.

\begin{problem}\label{intro:prob:tb}Assume that a framed knot in $S^3$ represents a 4-manifold admitting a Stein structure. Is the framing less than the maximal Thurston-Bennequin number of the knot?
\end{problem}

The affirmative answer to this problem clearly gives a simple characterization of a framed knot representing  a Stein 4-manifold. 
We remark that this problem is affirmative for many knots (e.g.\ positive torus knots) due to the adjunction inequality for Stein manifolds.  %We remark that any 4-manifold represented by a framed knot is non-contractible.  
In this paper, we prove the following. 
\begin{theorem}\label{thm:main}Either Problem~\ref{intro:prob:contra_fillable} or \ref{intro:prob:tb} has a negative answer. Moreover, if the answer to Problem~\ref{intro:prob:contra_fillable} $($resp.\ Problem~\ref{intro:prob:tb}$)$ is affirmative, then there exist infinitely many counterexamples to Problem~\ref{intro:prob:tb} $($resp.\ Problem~\ref{intro:prob:contra_fillable}$)$. 
\end{theorem}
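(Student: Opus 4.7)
The plan is to exhibit, for infinitely many indices $i$, a pair $(W_i,(K_i,n_i))$ with the following properties: (a) $W_i$ is a compact contractible smooth 4-manifold with Stein fillable boundary; (b) $(K_i,n_i)$ is a framed knot in $S^3$ satisfying $n_i\geq tb_{\max}(K_i)$; and (c) $W_i$ admits a Stein structure if and only if the 2-handlebody $X_i := D^4\cup h^2_{K_i,n_i}$ does. Granted such a family, the theorem is immediate: if Problem~\ref{intro:prob:contra_fillable} has an affirmative answer then each $W_i$ is Stein, hence each $X_i$ is Stein, producing infinitely many counterexamples to Problem~\ref{intro:prob:tb}; if instead Problem~\ref{intro:prob:tb} has an affirmative answer then each $X_i$ is non-Stein, hence each $W_i$ is non-Stein, producing infinitely many counterexamples to Problem~\ref{intro:prob:contra_fillable}.

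To realize this I would build each $W_i$ as a Mazur-type manifold admitting two complementary handle presentations of the same underlying 4-manifold. The first presentation contains a cancelling 1--2-handle pair whose removal exposes $X_{K_i,n_i}$ as a codimension-zero submanifold of $W_i$, with the framed knot $(K_i,n_i)$ canonically attached to the construction. The second presentation keeps the 1-handle in place and exhibits all attaching circles as Legendrian with framing one less than their Thurston--Bennequin numbers, so that the Eliashberg--Gompf theorems \cite{E2, G1} produce a Stein filling of $\partial W_i$; crucially, this filling may well be a 4-manifold different from $W_i$ itself, which is precisely why the Stein fillability of $\partial W_i$ does not automatically give Stein-ness of $W_i$. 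The equivalence (c) would be extracted from the first presentation by arguing that $X_i\subset W_i$ is a \emph{Stein subdomain}, namely the sublevel set of a strictly plurisubharmonic Morse function compatible with any Stein structure on $W_i$, so that a Stein structure on $W_i$ restricts to one on $X_i$; the converse implication is standard, as Stein-ness persists under the addition of subcritical 1-handles and of critical 2-handles attached along Legendrians with the usual framing condition.

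The main technical step is establishing the bound $n_i\geq tb_{\max}(K_i)$. Upper bounds on maximal Thurston--Bennequin numbers are delicate and require invariants such as the Kauffman polynomial, the HOMFLY polynomial, or ruling polynomials; the $K_i$ emerging from the Mazur construction must be designed so that one of these invariants computes $tb_{\max}(K_i)$ sharply and matches the framing $n_i$ forced geometrically by the construction. This is precisely where the \emph{rulings} of the keywords should enter. A secondary obstacle is rigorously justifying the Stein subdomain property in (c): Stein structures on $W_i$ need not a priori respect any chosen handle decomposition, so one must argue that any Stein structure on $W_i$ can be homotoped through Stein structures into one compatible with the sublevel set $X_i$. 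Infiniteness of the family is comparatively cheap, obtained by varying $K_i$ through, say, a sequence of knots with distinct Alexander polynomials or by taking boundary connect sums, and distinguishing the resulting $W_i$ via Casson-type invariants of $\partial W_i$.
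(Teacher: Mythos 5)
Your logical skeleton for deducing the theorem from a family of pairs $(W_i,(K_i,n_i))$ matches the paper's, and your instinct that rulings and Kauffman-type bounds are what pin down $\overline{tb}(K_i)$ sharply is also on target. But the pivotal step (c) is set up with the containment in the wrong direction, and the direction you actually need is a genuine gap. In both branches of your argument the only implication you invoke is ``$W_i$ Stein $\Rightarrow X_i$ Stein'' (branch two uses its contrapositive), and since in your construction $X_i$ sits \emph{inside} $W_i$ as a codimension-zero piece, this is the \emph{restriction} direction: you must show that an arbitrary Stein structure on $W_i$ can be homotoped so that the prescribed subdomain $X_i$ becomes a sublevel set of a plurisubharmonic Morse function. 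No such statement is known, and this is not a ``secondary obstacle'': Stein structures on a 4-manifold need not be compatible with any chosen handle decomposition, and there is no technique for homotoping them through Stein structures to achieve this. Meanwhile the direction you call ``standard'' --- extending a Stein structure over added subcritical 1-handles and Legendrian 2-handles --- is the one your argument never actually uses.

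The paper avoids this trap by reversing the roles: the 2-handlebody represented by the framed knot is arranged to be the \emph{larger} manifold. Concretely, $Y_{n,k}=X_{n,k}\natural N$ is the boundary connected sum of the contractible piece $X_{n,k}$ (whose boundary is shown to be Stein fillable by a separate boundary-preserving modification, much as you envisage for your ``second presentation'') and a compact Stein piece $N$, a disk bundle over $S^2$; a handle cancellation identifies $Y_{n,k}$ with the 4-manifold represented by the $(n,-1)$-cable $K_{k-4n,n}$ of a ribbon knot with framing $-n$. The only implication then needed in either branch is ``$X_{n,k}$ Stein $\Rightarrow Y_{n,k}$ Stein,'' which is just the standard fact that a boundary connected sum of Stein domains is Stein; the bound $-n>\overline{tb}(K_{m,n})$ comes from a cabling formula proved via Rutherford's ruling theorem, and infiniteness from the Casson invariant of $\partial X_{n,k}$, as you anticipated. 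To repair your proposal you would need to restructure it so that the contractible manifold is a boundary-connect-summand of the 2-handlebody rather than an ambient manifold containing it.
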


In fact, we give infinitely many potential counterexamples to Problem~\ref{intro:prob:contra_fillable} (see Conjecture~\ref{conj:non-Stein}). This theorem easily follows from the following examples, which we construct from the potential counterexamples to Problem~\ref{intro:prob:contra_fillable}. 
\begin{theorem}\label{intro:thm:example}There exist infinitely many framed knots in $S^3$ satisfying the following conditions. 
\begin{itemize}
 \item Each framed knot represents a 4-manifold which is diffeomorphic to the boundary connected sum of a contractible 4-manifold with Stein fillable boundary 3-manifold and a compact Stein 4-manifold. 
 \item The framing of each framed knot is not less than the maximal Thurston-Bennequin number of the knot. Moreover, each framed knot can be chosen so that the framing is arbitrarily larger than the maximal Thurston-Bennequin number of the knot. 
\end{itemize}
\end{theorem}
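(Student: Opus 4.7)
The strategy is to assemble the examples directly from the family of potential counterexamples to Problem~\ref{intro:prob:contra_fillable}: let $\{W_n\}$ denote the infinite family of compact contractible 4-manifolds with Stein fillable boundary that I expect to propose in Conjecture~\ref{conj:non-Stein}, and fix a handle decomposition of each $W_n$ with one 0-handle together with some 1- and 2-handles. For each $n$ I would take a compact Stein 4-manifold $X_n$ of the form $D^4\cup_{L_n}h^2$, where $L_n$ is a Legendrian knot and the 2-handle is attached with framing $\mathrm{tb}(L_n)-1$. The target 4-manifold is $W_n \natural X_n$; the first bullet of the theorem is immediate from this construction.

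The next step is a Kirby-calculus argument showing that $W_n \natural X_n$ admits a handle decomposition with a single 0-handle and a single 2-handle, and thus is represented by a framed knot $(K_n,f_n)$. In the obvious diagram of $W_n \natural X_n$, the $W_n$- and $X_n$-pieces occupy disjoint balls in $S^3$; by sliding the $X_n$-handle over the 2-handles of $W_n$ whose attaching circles pass through the 1-handles, I would arrange that the resulting attaching circle links each dotted 1-handle circle of $W_n$ algebraically once. Iterated 1-handle/2-handle cancellations then eliminate every 1-handle and all but one of the 2-handles, and the surviving 2-handle in $S^3$, with its induced framing, is the desired $(K_n,f_n)$. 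The geometry of this simplification is dictated by the specific cork-like structure of the $W_n$.

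The essential step is the inequality $f_n \geq \overline{\mathrm{tb}}(K_n)$. I would bound $\overline{\mathrm{tb}}(K_n)$ from above using a polynomial invariant — for instance the HOMFLY bound of Fuchs--Tabachnikov or the sharper ruling-polynomial bound of Rutherford, which is natural here given that the attaching circle $K_n$ is produced by an explicit Kirby-calculus recipe well suited to combinatorial ruling arguments. To realize the ``arbitrarily larger'' assertion, I would replace $L_n$ by iterated negative stabilizations, which preserve its smooth knot type (and hence that of $K_n$, so $\overline{\mathrm{tb}}(K_n)$ is unaffected) while driving $\mathrm{tb}(L_n)$ down; in the final diagram this translates into an additive shift of $f_n$ that makes $f_n-\overline{\mathrm{tb}}(K_n)$ arbitrarily large.

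The principal obstacle is the polynomial computation for the family $\{K_n\}$: because the simplifying slides can inflate the crossing complexity of the attaching circle substantially, the $W_n$ of the conjecture must be chosen so that the resulting $K_n$ fit into a uniformly tractable class, where the top $a$-degree of the HOMFLY polynomial (or the existence of an appropriate graded ruling) can be controlled in $n$. This is the place where matching the algebraic form of the $W_n$ to the analytic tools for $\overline{\mathrm{tb}}$ upper bounds requires the most care.
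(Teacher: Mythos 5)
Your overall skeleton matches the paper's strategy: boundary-connect-sum a contractible piece having Stein fillable boundary with a one-2-handle Stein piece, cancel down to a single framed knot, and control $\overline{tb}$ of the resulting knot by a ruling/polynomial argument. However, one step as written is wrong, and the heart of the theorem is left unproved. The wrong step is the ``arbitrarily larger'' mechanism: you propose stabilizing $L_n$, claiming this preserves the smooth type of $K_n$ (hence $\overline{tb}(K_n)$) while shifting $f_n$. Stabilization lowers $tb(L_n)$ and therefore changes the framing of the Stein 2-handle, which changes the diffeomorphism type of $X_n$ (its intersection form is $\langle tb(L_n)-1\rangle$) and, after the slides and cancellations, the knot type of $K_n$ as well. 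This is visible in the paper's realization: the Stein piece there is a $-n$-framed unknot, and the surviving attaching circle is the $(n,-1)$-cable $K_{m,n}=C_{n,-1}(K_m)$, so the framing parameter of the Stein piece is simultaneously a cabling parameter of $K_n$; changing it changes the knot, not just the framing. The paper instead makes the gap $f_n-\overline{tb}(K_n)$ arbitrarily large by varying the twisting parameter $m$ inside the \emph{contractible} piece $X_{n,m+4n}$, keeping the framing $-n$ fixed, and using the exact value $\overline{tb}(K_{m,n})=-2mn-3n+1$ to see the gap grow linearly in $m$.

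The second, and more serious, issue is that the essential quantitative step --- an effective upper bound on $\overline{tb}(K_n)$ for a concretely constructed family --- is precisely what you defer as ``the principal obstacle.'' The paper resolves it by (i) choosing the contractible pieces $X_{n,k}$ so that the cancelled diagram produces the cable $C_{n,-1}(K_m)$ of an explicit ribbon knot $K_m$ (Lemma~\ref{lem:cancel}); (ii) exhibiting a ruling of a Legendrian representative of $K_m$, so Rutherford's theorem gives $\overline{tb}(K_m)=-2m-2$ exactly (Lemma~\ref{lem:ruling_K_m}); and (iii) proving a new cabling formula for $\overline{tb}$ of $(p,q)$-cables of knots admitting rulings, valid for $q\ge \overline{tb}(K)p+p$ (Proposition~\ref{prop:cabling}). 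Without an analogue of (iii), or some other exact computation for the specific knots your Kirby moves produce, the inequality $f_n\ge\overline{tb}(K_n)$ is unsupported; a generic Kauffman or HOMFLY bound only helps if it happens to land below $f_n$, which must be verified and is exactly where the work lies. Finally, note that Stein fillability of the boundary of the contractible piece is itself a nontrivial claim requiring proof (the paper's Lemma~\ref{lem:Stein fillable} does this by an explicit blow-up and Kirby calculus); it cannot simply be stipulated of the conjectured non-Stein contractible manifolds.
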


To determine the maximal Thurston-Bennequin numbers of these knots, we give a cabling formula under a certain condition, utilizing a result of Rutherford \cite{Ru} on rulings. This formula might be of independent interest (Proposition~\ref{prop:cabling}). We remark that 
other cabling formulas under totally different conditions were given by Etnyre and Honda \cite{EH2} and Tosun \cite{T} using contact width, but it seems difficult to verify that a knot satisfies the conditions on contact width. Yet another cabling formula under a different condition, which can be easily checked, was given by Lidman and Sivek~\cite{LSi} using a technique of Etnyre and Honda \cite{EH}. 
 
\section{Construction of framed knots}

We first introduce contractible 4-manifolds. For integers $n$ and $k$, let $X_{n,k}$  be the smooth 4-manifold given by the handlebody diagram in Figure~\ref{fig:X_n,k}. Here the box $n$ denotes the $n$ right-handed full twists. Each $X_{n,k}$ is clearly contractible, and thus its boundary $\partial X_{n,k}$ is a homology 3-sphere. We remark that each $X_{0,k}$ is diffeomorphic to $D^4$, since the 2-handle goes over the 1-handle geometrically once after isotopy. Let $Y_{n,k}$ denote the boundary connected sum of $X_{n,k}$ and the total space of the $D^2$-bundle over $S^2$ with Euler number $-n$. Clearly $Y_{n,k}$ has the handlebody diagram shown in Figure~\ref{fig:Y_n,k}. 
\begin{figure}[h!]
\begin{center}
\includegraphics[width=1.2in]{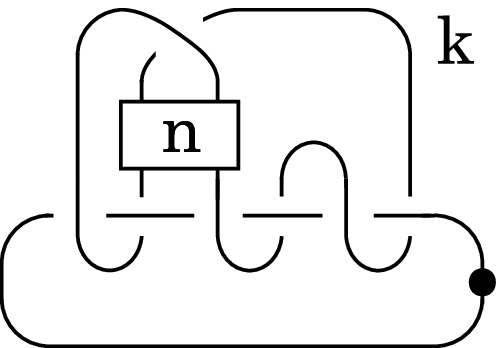}
\caption{$X_{n,k}$}
\label{fig:X_n,k}
\end{center}
\end{figure}
\begin{figure}[h!]
\begin{center}
\includegraphics[width=1.7in]{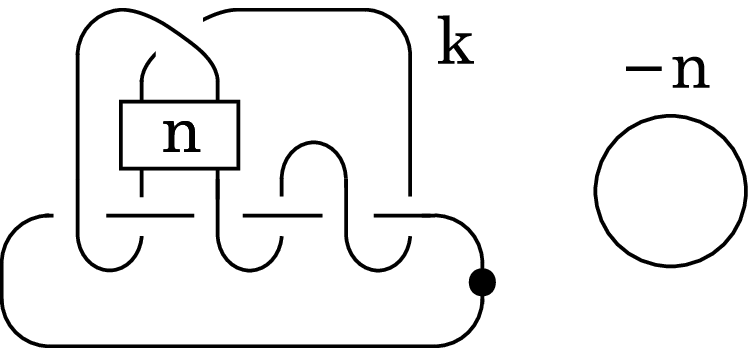}
\caption{$Y_{n,k}$}
\label{fig:Y_n,k}
\end{center}
\end{figure}

We here show that the boundary of $X_{n,k}$ is Stein fillable. We assume that the reader is familiar with handlebody descriptions of Stein 4-manifolds. For their details, we refer to \cite{GS}. 
\begin{lemma}\label{lem:Stein fillable}
$(1)$ $X_{n,k}$ admits a Stein structure for $n\geq 1$ and $k\leq 2n-1$.\smallskip \\
$(2)$ $\partial X_{n,k}$ is Stein fillable for $n\geq 2$ and $k\geq 4n-4$. 
\end{lemma}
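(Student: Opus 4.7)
The overall strategy is classical Eliashberg--Gompf: to show a handlebody admits a Stein structure, one exhibits a handle diagram in which every 1-handle is a dotted Legendrian unknot in standard form and every 2-handle is attached along a Legendrian knot with framing equal to $\mathrm{tb}-1$. For part~(2) we have extra flexibility, because we only need to produce \emph{some} Stein filling of $\partial X_{n,k}$ and are free to replace $X_{n,k}$ by any other compact 4-manifold with the same boundary.

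\emph{Plan for (1).} I would place the 1-handle of Figure~\ref{fig:X_n,k} as a dotted Legendrian unknot in standard position and isotope the attaching circle of the 2-handle into Legendrian front projection. The twist box of $n$ right-handed full twists can be drawn as a positive braid on two strands, contributing $2n$ to the writhe. After counting the writhe and the right cusps coming from the remainder of the diagram, I expect to find that the maximal Thurston--Bennequin number attainable for this particular Legendrian representative is exactly $2n$. Since each downward zig-zag decreases $\mathrm{tb}$ by one without altering the underlying knot, any value $\mathrm{tb}=k+1$ with $k\le 2n-1$ can be realized, and then the 2-handle framing $k=\mathrm{tb}-1$ is precisely the Stein condition.

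\emph{Plan for (2).} Here I would modify the handle decomposition of $X_{n,k}$ by Kirby moves that fix the boundary. The natural move is to introduce a canceling 2/3-handle pair and slide handles across the dotted circle and the new 2-handle, trading the single 2-handle of framing $k$ for a new collection of 2-handles whose effective framings are sufficiently negative. One then converts the resulting diagram to Legendrian form and checks that each new 2-handle is attached with framing $\mathrm{tb}-1$ along its Legendrian representative; the surviving 1-handle (if any) can often be canceled against one of the new 2-handles. The threshold $k\ge 4n-4$ should appear as the precise value at which the framings produced by these slides fall below $\mathrm{tb}-1$ for the new attaching knots, roughly because the handle slides duplicate a strand through the twist box and so double the writhe that the twist box contributes (accounting for the factor of two between $2n-1$ and $4n-4$).

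\emph{Main obstacle.} Part~(1) is essentially a bookkeeping exercise: drawing the front, counting writhe and cusps, and inserting zig-zags. The real technical heart of the proof is part~(2): one must find the correct sequence of Kirby moves, and then carefully track both the resulting attaching knots and their framings to verify the Stein condition. Finding a move that improves the relevant $\mathrm{tb}$-bound by roughly a factor of two, rather than a fixed additive constant, is the delicate point that forces the $k\ge 4n-4$ hypothesis.
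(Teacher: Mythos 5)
Your plan for part (1) coincides with the paper's proof: the author simply exhibits the Legendrian front of Figure~\ref{fig:Stein_X_n,k}, in which the $n$ positive full twists on two strands let the attaching circle realize $tb=2n$, so every framing $k\le 2n-1$ is $\le tb-1$ after stabilizations. That part is fine.

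Part (2), however, has a genuine conceptual gap. The moves you propose --- introducing a canceling 2-/3-handle pair and performing handle slides --- are honest Kirby moves: they preserve the diffeomorphism type of the four-manifold, not merely its boundary. So if your procedure terminated in a Stein handlebody, you would have shown that $X_{n,k}$ \emph{itself} admits a Stein structure for $k\ge 4n-4$. But the entire point of stating part (2) separately from part (1) is that for $n\ge 2$ and $k\ge 4n-4>2n-1$ one does not expect this; indeed Conjecture~\ref{conj:non-Stein} asserts that $X_{n,k}$ admits \emph{no} Stein structure for $k\ge 4n$, and if your argument worked it would (combined with Lemma~\ref{lem:cancel} and Proposition~\ref{prop:tb(K_m,n)}) immediately resolve the paper's main dichotomy by producing counterexamples to Problem~\ref{intro:prob:tb}. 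The move that is actually needed, and that the paper uses, is a \emph{blow-up}: connect-summing with $\overline{\mathbb{CP}}{}^2$ changes the interior while fixing the boundary, and it is the blown-up and further modified manifold of Figure~\ref{fig:fillable} --- not $X_{n,k}$ --- that carries the Stein structure filling $\partial X_{n,k}$. Your toolkit contains no interior-changing move, so it cannot produce such a filling. A secondary problem: a compact Stein domain has handles of index at most $2$, so the 3-handle you introduce must eventually be canceled, which again forces you back to a handle decomposition of $X_{n,k}$ itself. Your heuristic for the threshold $4n-4$ is also not tied to any concrete computation; in the paper it arises from tracking framings through the blow-ups, not from ``doubling the writhe'' via slides over a $0$-framed unknot.
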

\begin{proof}The claim (1) follows from the Stein handlebody diagram of $X_{n,k}$ in Figure~\ref{fig:Stein_X_n,k}. We prove (2). By blowing up $X_{n,k}$ for $n\geq 2$ and $k\geq 4n-4$, we obtain the first diagram in Figure~\ref{fig:fillable}. Modifying this 4-manifold preserving the boundary, we obtain the third diagram in the figure. Converting the 1-handle notation, we obtain the Stein handlebody given by the last diagram. We thus realized $\partial X_{n,k}$ as the boundary of a 4-manifold admitting a Stein structure. 
\end{proof}

\begin{figure}[h!]
\begin{center}
\includegraphics[width=2.1in]{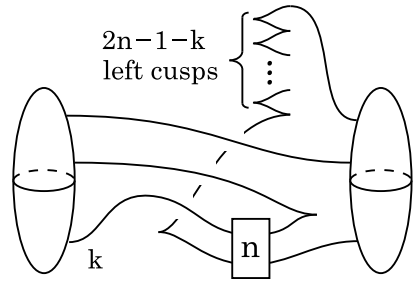}
\caption{Stein handle decomposition of $X_{n,k}$ $($$n\geq 1$ and $k\leq 2n-1$$)$}
\label{fig:Stein_X_n,k}
\end{center}
\end{figure}

\begin{figure}[h!]
\begin{center}
\includegraphics[width=4.8in]{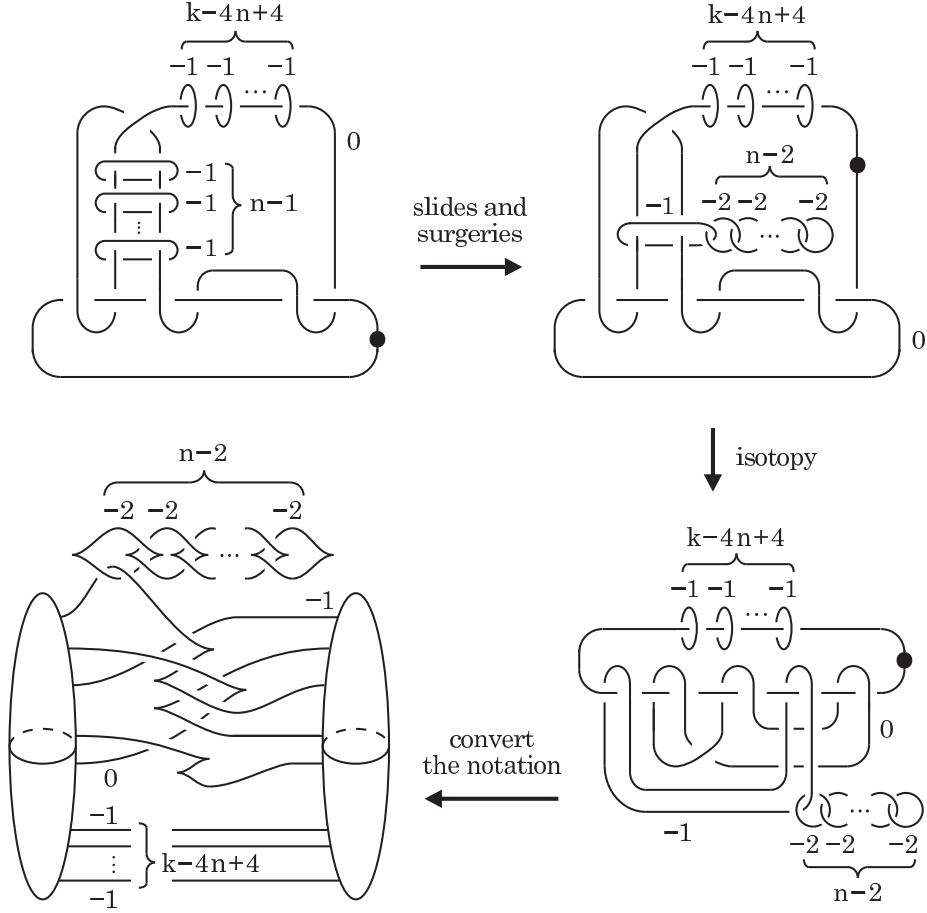}
\caption{4-manifolds bounded by $\partial X_{n,k}$ $($$n\geq 2$ and $k\geq 4n-4$$)$}
\label{fig:fillable}
\end{center}
\end{figure}

For an integral homology 3-sphere $M$, let $\lambda(M)$ denote the Casson invariant of $M$. For the basics of Casson invariants, we refer to \cite{Sa}. 
\begin{lemma}\label{lem:Casson}$\lambda(\partial X_{n,k})=-2n$ for any integers $n$ and $k$. Consequently, $\partial X_{n,k}$ is not homeomorphic to $S^3$ if $n\neq 0$. 
\end{lemma}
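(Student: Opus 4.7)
The plan is to realize $\partial X_{n,k}$ as a Dehn surgery on a knot in $S^{3}$ and then invoke the classical Casson surgery formula, so that the calculation of $\lambda(\partial X_{n,k})$ reduces to computing the second derivative of an Alexander polynomial at $1$.

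First I would convert the Kirby diagram of $X_{n,k}$ in Figure~\ref{fig:X_n,k} to a pure surgery diagram by replacing the dotted 1-handle circle with a $0$-framed unknot $C$. This presents $\partial X_{n,k}$ as surgery on a two-component link $C\sqcup K\subset S^{3}$, where $K$ is the attaching circle of the $2$-handle; the framing on $K$ is determined by $n$ and $k$, and because $X_{n,k}$ is contractible the resulting linking matrix is unimodular over $\mathbb{Z}$. A standard Rolfsen twist along the unknot $C$ (equivalently a slam-dunk followed by handle slides) then eliminates the $0$-framed unknot and absorbs the $n$ right-handed full twists in the box as the coefficient of the remaining single-knot surgery, yielding an identification
\[
\partial X_{n,k}\;\cong\;S^{3}_{1/n}(J_{k})
\]
for some knot $J_{k}\subset S^{3}$ depending on $k$.

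Second I would apply the Casson–Walker surgery formula for $1/n$-surgeries on knots in $S^{3}$,
\[
\lambda\bigl(S^{3}_{1/n}(J_{k})\bigr)\;=\;\frac{n}{2}\,\Delta''_{J_{k}}(1),
\]
where $\Delta_{J_{k}}$ denotes the symmetrized Alexander polynomial normalized by $\Delta_{J_{k}}(1)=1$. The desired equality $\lambda(\partial X_{n,k})=-2n$ is then equivalent to the assertion
\[
\Delta''_{J_{k}}(1)=-4\qquad\text{for every integer }k.
\]
I would verify this either by writing $\Delta_{J_{k}}(t)$ as an explicit Laurent polynomial in $t$ whose coefficients depend affinely on $k$ and observing that the $k$-dependent terms contribute $0$ to the second derivative at $t=1$ (because they are of the form $a_k(t+t^{-1}-2)$ or have $(t-1)^{3}$ as a factor), or alternatively by using a skein/crossing-change argument showing that varying $k$ by $1$ adds to $J_{k}$ a band move that changes $\Delta''(1)$ by zero.

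The conceptual obstacle is precisely this $k$-independence: a priori the knot $J_{k}$ varies nontrivially with $k$, and it is only the invariant $\tfrac{1}{2}\Delta''(1)$ that fails to detect the change. Once this is in hand, everything is immediate: the handle calculus of Step~1 is routine, the surgery formula of Step~2 is standard, and the second assertion of the lemma follows from $\lambda(S^{3})=0$, so $n\neq 0$ forces $\lambda(\partial X_{n,k})\neq 0$ and hence $\partial X_{n,k}\not\cong S^{3}$.
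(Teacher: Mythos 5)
Your overall skeleton matches the paper's: trade the dotted circle for a $0$-framed unknot, use a Rolfsen twist to present $\partial X_{n,k}$ as $\pm\frac{1}{n}$-surgery on a knot in $S^3$, and then apply the Casson surgery formula $\lambda(S^3_{1/n}(J))=\frac{n}{2}\Delta''_J(1)$, with the second assertion following from $\lambda(S^3)=0$. Up to the sign convention on the surgery coefficient (the paper gets a $-\frac{1}{n}$-framed knot $A_{n,k}$ and correspondingly needs $\Delta''_{A_{n,k}}(1)=+4$), this is exactly the route taken in the text.

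The genuine gap is the last step: you never actually establish $\Delta''_{J_k}(1)=-4$. You obtain that value by solving backwards from the conclusion you want, and then offer two possible verification strategies (an explicit Laurent-polynomial computation whose $k$-dependent terms vanish to second order at $t=1$, or a skein argument) without carrying out either. Both would require first identifying the knot $J_k$ concretely from the figure and then doing a nontrivial Alexander polynomial computation, and neither is done; as it stands the crucial constant is unproven. The paper circumvents this entirely: it quotes Corollary 1.7 of Akbulut--Karakurt, which gives $\lambda(\partial X_{1,k})=-2$ for all $k$, and then runs the surgery formula in reverse for $n=1$ to conclude $\Delta''(1)=4$, after which the general $n$ follows from the same formula. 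If you do not want to rely on that external computation, you must actually exhibit $\Delta_{J_k}(t)$ (or its Conway coefficient $a_2(J_k)=\frac{1}{2}\Delta''_{J_k}(1)$) and verify both the value $-4$ and its independence of $k$; a related subtlety you should also address is that a priori the surgery knot may depend on $n$ as well as $k$ (the paper writes $A_{n,k}$), so the constancy of $\Delta''(1)$ across the whole family is precisely what needs proof.
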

\begin{proof}A Rolfsen twist gives us the Dehn surgery diagram of $\partial X_{n,k}$ in Figure~\ref{fig:Casson}. Let $A_{n,k}$ be the $-\dfrac{1}{n}$-framed knot in the diagram. Ignoring $A_{n,k}$, this diagram gives $\partial{X_{0,k-4n}}$, which is diffeomorphic to $S^3$. Note that the $0$-framing of $A_{n,k}$ induced from this diagram of $\partial{X_{0,k-4n}}$ is equal to the $0$-framing of $A_{n,k}$ induced from a Seifert surface of $A_{n,k}$ in $S^3$. The surgery formula of the Casson invariant thus shows 
\begin{equation*}
\lambda(\partial X_{n,k})= \lambda(S^3)-\frac{n}{2}\Delta_{A_{n,k}}''(1)=-\frac{n}{2}\Delta_{A_{n,k}}''(1), 
\end{equation*}
where $\Delta_{A_{n,k}}$ denotes the Alexander polynomial of ${A_{n,k}}$. According to Corollary 1.7 in \cite{AKa}, we have $\lambda(\partial X_{1,k})=-{2}$. The above equality thus shows $\Delta_{A_{n,k}}''(1)=4$. Therefore the claim follows. 
\end{proof}
\begin{remark}This proof shows that $\partial X_{n,k}$ is obtained by a non-integral surgery along a knot in $S^3$. Due to \cite{GL}, this implies that $\partial X_{n,k}$ is an irreducible 3-manifold. 
\end{remark}

\begin{figure}[h!]
\begin{center}
\includegraphics[width=1.6in]{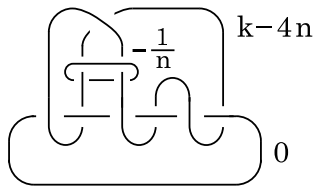}
\caption{$\partial X_{n,k}$}
\label{fig:Casson}
\end{center}
\end{figure}

Next we construct a framed knot by canceling the 1-handle of $Y_{n,k}$. Let us recall that, for a knot $K$ in $S^3$, the $(p,q)$-cable $C_{p,q}(K)$ of $K$ is defined to be a knot in $S^3$ which is a simple closed curve in the boundary $\partial \nu(K)$ of the tubular neighborhood $\nu(K)$ of $K$ representing the class $p[K']+q[\alpha]$ in $H_1(\partial \nu(K);\mathbb{Z})$. Here $\alpha$ is the positively oriented meridian of $K$, and $K'$ is the $0$-framing of $K$ induced from a Seifert surface of $K$.

For an integer $m$, let $K_m$ be the ribbon knot in $S^3$ given by Figure~\ref{fig:ribbon_knot}. For a positive integer $n$, we denote the $(n,-1)$ cable of $K_{m}$ by $K_{m,n}$. 
\begin{figure}[h!]
\begin{center}
\includegraphics[width=1.4in]{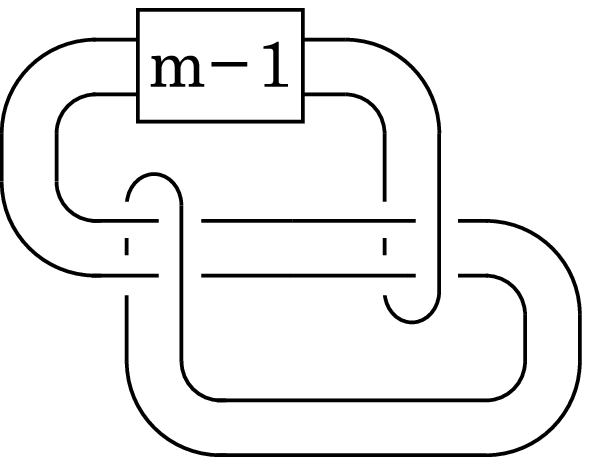}
\caption{$K_m$}
\label{fig:ribbon_knot}
\end{center}
\end{figure}

\begin{lemma}\label{lem:cancel}For integers $n,k$ with $n\geq 1$, $Y_{n,k}$ is diffeomorphic to the 4-manifold represented by the knot $K_{k-4n,n}$ with the framing $-n$. 
\end{lemma}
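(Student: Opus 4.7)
The plan is to perform a sequence of handle slides on the handlebody diagram of $Y_{n,k}$ in Figure~\ref{fig:Y_n,k} that cancels the 1-handle and leaves a single framed knot in $S^{3}$. The diagram has one 1-handle, the $k$-framed 2-handle $h_{1}$ of $X_{n,k}$ passing through it with an $n$-twist box, and the $-n$-framed unknotted 2-handle $h_{2}$ coming from the $D^{2}$-bundle summand. Because the bundle summand was attached by boundary connected sum, $h_{2}$ starts out disjoint from everything, so it can be isotoped freely in $S^{3}$; I first drag it so that it becomes a small unknot encircling the two parallel strands of $h_{1}$ that run through the $n$-twist box, giving those strands linking number $\pm 2$ with $h_{2}$.

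Next I perform handle slides of $h_{1}$ over $h_{2}$ that convert the twist structure into a cable structure. The idea is that band-summing parallel push-offs of $h_{2}$ into $h_{1}$ across the twist box turns each full twist into a parallel copy of the ambient attaching curve, eventually replacing the clasp-with-twist that $h_{1}$ makes around the 1-handle by $n$ parallel push-offs joined into the $(n,-1)$-cable pattern. The framing shift recorded by the slides is $-n\cdot 2^{2}=-4n$, which is exactly the Rolfsen-style effect used in the proof of Lemma~\ref{lem:Casson} and explains why the index $k-4n$ appears in the statement. After the slides, $h_{1}$ meets the 1-handle geometrically once and cancels with it.

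What remains is a single 2-handle in $S^{3}$ whose framing is $-n$ (inherited from $h_{2}$) and whose attaching circle is the $(n,-1)$-cable of the knot left behind as the untwisted ribbon-disk curve after cancellation; I then identify this companion knot with $K_{k-4n}$ of Figure~\ref{fig:ribbon_knot} by direct comparison of the two diagrams, giving the conclusion that $Y_{n,k}$ is represented by $K_{k-4n,n}$ with framing $-n$. The main obstacle is the precise Kirby-calculus bookkeeping: verifying that the final framing is exactly $-n$ and not some other combination, and that the companion curve that survives the 1-handle cancellation really is $K_{k-4n}$ rather than a twist variant. Both checks are routine but delicate and are most transparently carried out by drawing the intermediate stages of the handle slides.
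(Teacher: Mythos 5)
Your high-level strategy --- slide the $k$-framed handle $h_1$ over the $-n$-framed unknot $h_2$, cancel the slid $h_1$ against the 1-handle, and recognize the surviving $-n$-framed handle as attached along the $(n,-1)$-cable of $K_{k-4n}$ --- is exactly the paper's (it performs precisely two such slides and then cancels). But your opening move is impossible: since the bundle summand is attached by boundary connected sum, $h_2$ is split from $h_1$, so $\mathrm{lk}(h_1,h_2)=0$, and no isotopy of $h_2$ can produce linking number $\pm 2$ with the two coherently oriented strands in the twist box --- linking number is an isotopy invariant. This is not cosmetic, because your framing bookkeeping ``$-n\cdot 2^2=-4n$'' is the Rolfsen/blow-down formula for a $-\frac{1}{n}$-framed unknot of linking number $2$ (the operation used in the proof of Lemma~\ref{lem:Casson}), which is a different move from sliding over a $-n$-framed $2$-handle. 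Two slides of $h_1$ over the \emph{split} unknot $h_2$ with suitable orientations do give $k+(-n)+0$ and then $(k-n)+(-n)+2(-n)=k-4n$; but starting from your claimed $\mathrm{lk}=\pm 2$ they would give $k-4n\pm 8$ or $k$. So either your setup or your arithmetic must be discarded.

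Beyond that, the two checks you defer as ``routine but delicate'' are where the content of the lemma lives, and one of them needs an argument, not just a picture: after cancelling the 1-handle/2-handle pair the ambient $S^3$ is re-identified, so the surviving handle's framing is not simply ``inherited''; the paper pins it to $-n$ by computing the intersection form of $Y_{n,k}$. Likewise, the facts that the slid $h_1$ goes over the 1-handle geometrically once and that the companion of the resulting cable is $K_{k-4n}$ are exactly the content of Figure~\ref{fig:cancel}, so leaving them to unspecified intermediate diagrams leaves the proof unfinished. Finally, your intermediate description places the $(n,-1)$-cable pattern on $h_1$, but $h_1$ is the handle that gets cancelled; the cable structure appears on the image of $h_2$ only after the cancellation, as in the paper.
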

\begin{proof}We consider the diagram of $Y_{n,k}$ in Figure~\ref{fig:Y_n,k}. Sliding the $k$-framed knot over the $-n$-framed unknot twice, we obtain the upper diagram in Figure~\ref{fig:cancel}. Since the $(k-4n)$-framed knot goes over the 1-handle geometrically once after isotopy, the subhandlebody consisting of the 1-handle and this 2-handle is diffeomorphic to $D^4$. The intersection form of $Y_{n,k}$ tells that the $-n$-framed knot in the upper diagram becomes a $-n$-framed knot in $S^3$ after canceling the handle pair. By the definition of cable knots, one can easily check that this knot is the $(n,-1)$-cable of the unframed knot shown in the lower left diagram of Figure~\ref{fig:cancel}. Therefore it suffices to show that this unframed knot is isotopic to $K_{k-4n}$. By isotopy, we see that this knot is isotopic to the unframed knot in the lower right diagram. Canceling the handle pair, we easily see that the resulting knot is isotopic to $K_{k-4n}$. 
\end{proof}

\begin{figure}[h!]
\begin{center}
\includegraphics[width=4.0in]{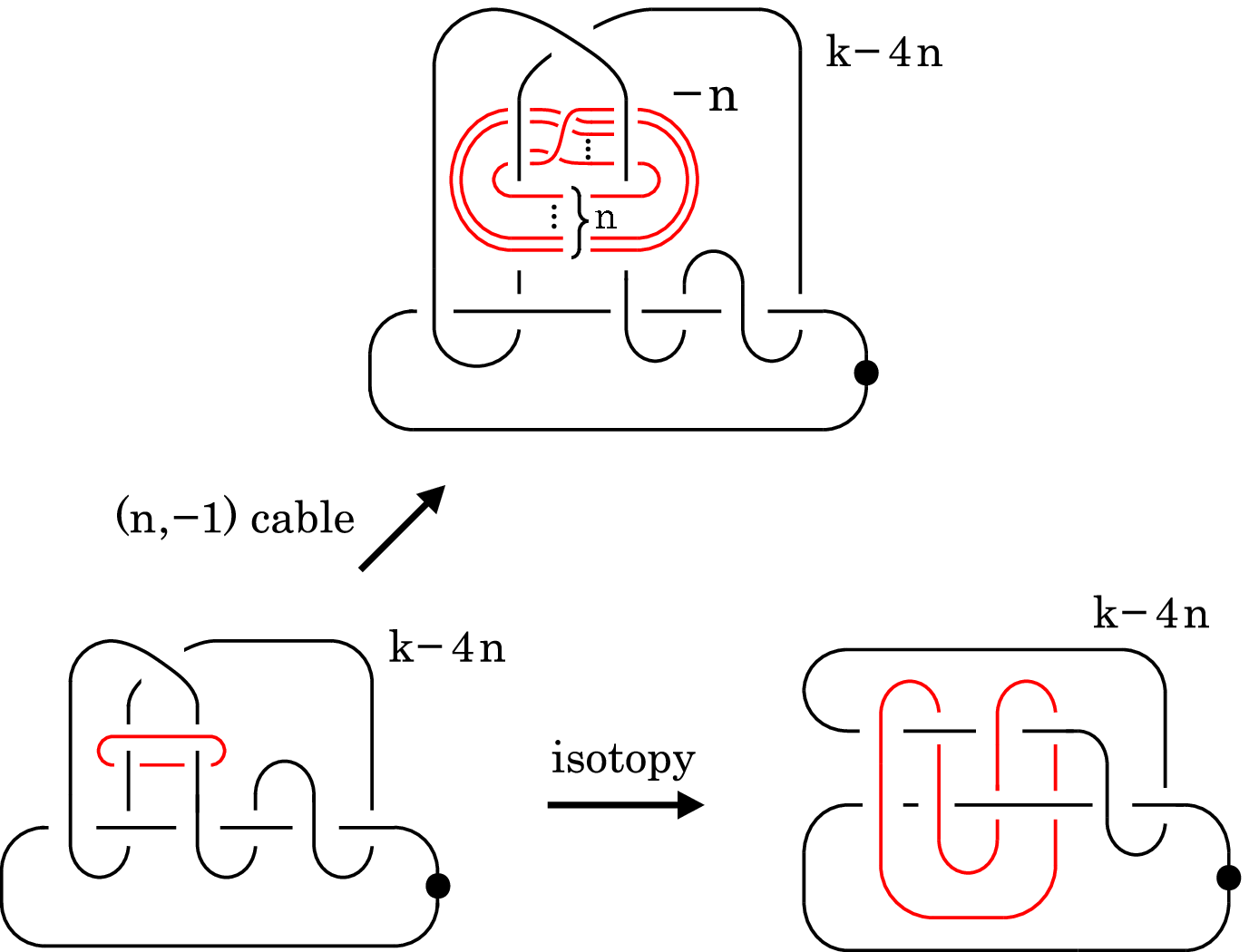}
\caption{}
\label{fig:cancel}
\end{center}
\end{figure}

\section{Rulings of Legendrian knots}In general, it is a difficult problem to determine the maximal Thurston-Bennequin number of a knot. In this section, we briefly recall rulings which provide a useful method for this problem. We refer to \cite{Ru} for details. For basics of contact topology and Legendrian knots, the readers can consult \cite{OS1}.

In this paper, a Legendrian knot in $S^3$ means the one with respect to the standard tight contact structure on $S^3$. We use the following notation. 
\begin{notation}For a Legendrian knot $\mathcal{K}$ in $S^3$, $tb(\mathcal{K})$ denotes the Thurston-Bennequin number of $\mathcal{K}$. A Legendrian representative of a (smooth) knot in $S^3$ is a Legendrian knot smoothly isotopic to the knot. For a (smooth) knot $K$ in $S^3$, the maximal Thurston-Bennequin number $\overline{tb}(K)$ of $K$ is the maximal value of $tb(\mathcal{K})$ of a Legendrian representative of ${K}$. 
\end{notation}

Let $\mathcal{K}$ be a Legendrian knot in $S^3$. By removing one point from $S^3$, we may assume that $\mathcal{K}$ is tangent to the standard tight contact structure $\textnormal{ker}(dz-ydx)$ on $\mathbb{R}^3=\{(x,y,z)\mid x,y,z\in \mathbb{R}\}$. We consider the front diagram (i.e.\ the projection into the $xz$ plane) of $\mathcal{K}$.  By planar isotopy, we may assume that all singular points of the front diagram $D$ of $\mathcal{K}$ have pairwise distinct $x$-coordinates. For a subset $\Lambda$ of the set of crossing points of the front diagram of $D$, let $D_\Lambda$ be the front diagram of a Legendrian link obtained from $D$ by \textit{smoothing} the neighborhood of each crossing point $\lambda\in \Lambda$ as shown in Figure~\ref{fig:smoothing}. We denote the $x$-coordinate of $\lambda\in \Lambda$ by $x_\lambda$. %We note that the $z$-direction is the vertical direction in diagrams. %Note that $\Lambda$ can be the empty set. 

\begin{definition}$\Lambda$ is called a ruling of $\mathcal{K}$ if the following conditions hold. 
\begin{itemize}
 \item Each knot component $\mathcal{K}_i$ of the front diagram $D_\Lambda$ has exactly one left cusp and no self-crossing. We denote the upper and the lower horizontal strands by $U_i$ and $L_i$, respectively. 
 \item For each $\lambda\in \Lambda$, the two strands obtained by smoothing at the crossing point $\lambda$ belong to two distinct knot components of $D_{\Lambda}$. We denote the upper and the lower of these strands by $P_\lambda$ and $Q_\lambda$, respectively. 
 \item (Normality) For each $\lambda\in \Lambda$, there exist two knot components, denoted by $\mathcal{K}_i$ and $\mathcal{K}_j$, satisfying one of the following conditions (see also the lower part of Figure~\ref{fig:smoothing}). 
 \begin{enumerate}
 \item [(i)] $P_\lambda=L_i$ and $Q_\lambda=U_j$.
 \item [(ii)] $P_\lambda=L_i$ and $Q_\lambda=L_j$. Furthermore, the $z$-coordinate of $U_i$  is less than that of $U_j$ at $x=x_\lambda$. 
 \item [(iii)] $P_\lambda=U_i$ and $Q_\lambda=U_j$. Furthermore, the $z$-coordinate of $L_i$  is less than that of $L_j$ at $x=x_\lambda$. 
\end{enumerate}
\end{itemize}
\end{definition}

\begin{figure}[h!]
\begin{center}
\includegraphics[width=3.8in]{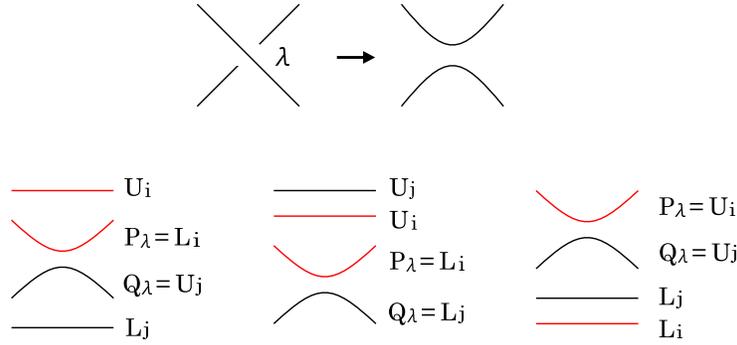}
\caption{Smoothing and the normality condition}
\label{fig:smoothing}
\end{center}
\end{figure}
We remark that there are many knots admitting Legendrian representatives with rulings (see \cite{Ka} and the references therein). However, there are also many knots (e.g.\ many negative torus knots) in $S^3$ which have no Legendrian representative admitting a ruling (cf.\ \cite{Ng1}). 

According to the following result of Rutherford, one can determine the maximal Thurston-Bennequin numbers of knots admitting Legendrian representatives with rulings. 
\begin{theorem}[Rutherford~\cite{Ru}]\label{thm:Rutherford}If $\mathcal{K}$ is a Legendrian knot in $S^3$ admitting a ruling, then $\overline{tb}(\mathcal{K})=tb(\mathcal{K})$. 
\end{theorem}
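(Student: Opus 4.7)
The plan is to deduce Rutherford's theorem from the Franks--Williams--Morton (FWM) bound on the Thurston--Bennequin number by showing that a ruling forces the bound to be sharp. Recall the FWM inequality: for any Legendrian representative $\mathcal{L}$ of a smooth knot $K$ in $S^3$,
\begin{equation*}
tb(\mathcal{L}) + |r(\mathcal{L})| \;\leq\; -e_v\!\bigl(P_K(v,z)\bigr) - 1,
\end{equation*}
where $P_K(v,z)$ is the HOMFLY polynomial and $e_v$ denotes the minimum $v$-degree. In particular, $\overline{tb}(K) \leq -e_v(P_K) - 1$, which gives one direction of the desired equality. The problem therefore reduces to showing that when $\mathcal{K}$ admits a ruling $\Lambda$, one has $tb(\mathcal{K}) \geq -e_v(P_K) - 1$, since $tb(\mathcal{K}) \leq \overline{tb}(K)$ is automatic.

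The strategy for the reverse inequality is skein-theoretic. I would apply the HOMFLY skein relation at each crossing of the front diagram $D$ of $\mathcal{K}$, iteratively resolving each crossing into either the oriented smoothing or the other resolution. This expands $P_K(v,z)$ as a weighted sum over all possible $2^{c}$ resolutions, where $c$ is the number of crossings of $D$; each resolution contributes a monomial in $v^{\pm 1}, z^{\pm 1}$ times the HOMFLY polynomial of a trivial link (after accounting for cusps, which contribute known powers of $v$). A careful bookkeeping of the $v$-powers in terms of writhe, cusps, and the chosen smoothings shows that the only resolutions contributing to the lowest possible $v$-degree, namely $v^{-tb(\mathcal{K})-1}$, are those for which the smoothed diagram is a disjoint union of Legendrian unknots, each with exactly one left cusp. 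The normality condition in the definition of a ruling is precisely what guarantees that a ruling produces such a disjoint unknot decomposition.

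The key identity I would establish is Rutherford's ruling formula:
\begin{equation*}
\bigl[v^{-tb(\mathcal{K})-1}\bigr]\, P_K(v,z) \;=\; \sum_{\Lambda\ \text{ruling of}\ \mathcal{K}} z^{j(\Lambda)}
\end{equation*}
for an appropriate non-negative integer exponent $j(\Lambda)$ determined by the Euler characteristic of $D_\Lambda$. The proof is an induction on $D$ ordered by $x$-coordinates of its singular points (cusps and crossings): one sweeps left-to-right, maintaining at each vertical slice a configuration of horizontal strands together with a pairing dictated by the partial ruling, and one checks that the three cases (i)--(iii) of normality correspond exactly to the resolutions that contribute to the extremal $v$-power with positive (rather than cancelling) coefficient. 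Because each ruling produces a $z^{j(\Lambda)}$ with no sign cancellation, the existence of at least one ruling shows the coefficient is nonzero, hence $e_v(P_K) \leq -tb(\mathcal{K}) - 1$, which combined with the FWM bound yields $\overline{tb}(K) = tb(\mathcal{K})$.

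The main obstacle is the combinatorial bookkeeping in the induction establishing Rutherford's identity: one must verify that crossings which do \emph{not} extend a partial ruling contribute strictly higher $v$-powers and so do not interfere, and simultaneously show that the three normality cases give compatible contributions without sign cancellation. Once this identity is in hand, the theorem follows immediately from the FWM bound.
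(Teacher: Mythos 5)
The paper does not actually prove this statement; it is quoted from Rutherford's paper \cite{Ru}, so there is no internal proof to compare against and your sketch has to be measured against Rutherford's actual argument. Your overall strategy is the right one: identify the extremal coefficient (in the framing variable) of a polynomial knot invariant with a generating function $\sum_{\Lambda} z^{j(\Lambda)}$ over normal rulings, observe that this is a sum of monomials with positive coefficients and hence nonzero as soon as one ruling exists, and conclude that the degree bound on $tb$ is attained. That is exactly the shape of Rutherford's proof.

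The genuine gap is your choice of invariant. The rulings defined in this paper are \emph{ungraded}: the normality condition places no constraint on orientations, so the smoothing at a switch is in general not the oriented smoothing. This breaks your argument in two places. First, the HOMFLY skein relation $a\,P_{L_+}-a^{-1}P_{L_-}=z\,P_{L_0}$ offers only one smoothing, the oriented one $L_0$ (the other term is a crossing change, not a second resolution), so the state sum over all $2^c$ resolutions that you describe is not available for $P_K$; that kind of expansion belongs to the Kauffman polynomial, whose skein relation involves both the oriented and the unoriented smoothing. Second, and more substantively, the extremal HOMFLY coefficient computes only the \emph{oriented} ($2$-graded) ruling polynomial; a knot can admit an ungraded ruling while admitting no oriented ruling, in which case the coefficient of $v^{-tb(\mathcal{K})-1}$ in $P_K$ vanishes and your argument gives nothing. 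The correct statement, which the paper's own remark following the theorem records, uses the Kauffman polynomial $F_K(a,z)$ and the bound $tb(\mathcal{K})\leq -\deg_a F_K-1$ of Rudolph--Tabachnikov: Rutherford's identity is that the coefficient of $a^{-tb(\mathcal{K})-1}$ in $F_K$ equals $\sum_{\Lambda} z^{j(\Lambda)}$ over ungraded normal rulings, and it is this identity, not the FWM/HOMFLY one, that yields $\overline{tb}(\mathcal{K})=tb(\mathcal{K})$ under the hypothesis of the theorem. With that substitution (and the corresponding left-to-right sweep through the front, which is essentially what you describe), your outline matches Rutherford's proof.
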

\begin{remark}Although finding a ruling of a Legendrian knot is not easy, the following result of Rutherford~\cite{Ru} characterizes the existence of a ruling: a Legendrian knot $\mathcal{K}$ admits a ruling if and only if the Kauffman bound of $\overline{tb}(\mathcal{K})$ is equal to  ${tb}(\mathcal{K})$. Here the Kauffman bound is the bound given by Kauffman polynomial. 
\end{remark}

%%%%%%%%%%%%%%%%%%%%%%%%%%
\section{Determining the maximal Thurston-Bennequin numbers}
In this section, we determine $\overline{tb}(K_{m,n})$ for $m\geq 0$ and prove our main theorem. We first give a ruling for $K_m$.

\begin{lemma}\label{lem:ruling_K_m}For $m\geq 0$, the knot $K_m$ has a Legendrian representative with ${tb}=-2m-2$ which admits a ruling. Consequently, $\overline{tb}(K_m)=-2m-2$ for $m\geq 0$.
\end{lemma}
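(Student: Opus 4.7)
The plan is to prove the lemma in three steps: (i) exhibit an explicit Legendrian front diagram $\mathcal{K}$ of $K_m$ whose Thurston--Bennequin number equals $-2m-2$, (ii) construct a ruling of $\mathcal{K}$, and (iii) apply Theorem~\ref{thm:Rutherford} to conclude $\overline{tb}(K_m) = tb(\mathcal{K}) = -2m-2$.

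First, I would Legendrianize the ribbon diagram in Figure~\ref{fig:ribbon_knot}. Since $K_m$ is built from a fixed ribbon-knot template together with a box of $m$ right-handed full twists, a natural front realization replaces the twist box by the standard front for $m$ full twists, contributing $2m$ crossings between two parallel strands, and places cusps of minimal multiplicity at the two ends of the ribbon band. Using the standard formula $tb(\mathcal{K}) = \mathrm{writhe}(D) - \#\{\text{right cusps of }D\}$, I would verify that the base case $m=0$ yields $tb = -2$ (the ribbon diagram with no twists is isotopic to a Legendrian unknot-like front with two right cusps and vanishing writhe), and that each additional full twist adds two positive crossings while leaving the cusp count unchanged minus two, so that the net effect is $tb \mapsto tb - 2$. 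Iterating gives $tb(\mathcal{K}) = -2m-2$.

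Next I would construct a ruling $\Lambda$ of $\mathcal{K}$. The ribbon structure of $K_m$ suggests a canonical candidate: smoothing at the crossings arising from the ribbon singularities splits $\mathcal{K}$ into a disjoint union of unknotted front components, each with a single left cusp. Inside the twist box, I would augment $\Lambda$ by one crossing per full twist, chosen so that after all smoothings the two strands involved lie in different components of $D_{\Lambda}$. The normality condition at each $\lambda \in \Lambda$ is read off from the $z$-height ordering forced by the standard front of a full twist: the alternating over/under pattern of crossings means that exactly one of the cases (i)--(iii) in the definition of ruling applies at each $\lambda$, and the requisite inequality on $z$-coordinates is automatic.

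The main obstacle will be the combinatorial bookkeeping inside the twist box: one must track which component each strand belongs to after the sequence of smoothings and check simultaneously that the $z$-height inequalities in the normality condition hold at every $\lambda$. I expect this to work because the alternating crossing pattern in a front full twist produces a consistent "swap" between the two interacting components, so the relative heights propagate correctly through the whole box. Once the ruling is established, Theorem~\ref{thm:Rutherford} immediately gives $\overline{tb}(K_m) \leq tb(\mathcal{K}) = -2m-2$, and the reverse inequality is built into the construction, completing the proof.
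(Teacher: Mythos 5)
Your proposal follows exactly the paper's proof: exhibit a Legendrian front of $K_m$ realizing $tb=-2m-2$, equip it with a ruling, and invoke Theorem~\ref{thm:Rutherford} (the paper simply displays such a front together with its ruling in Figure~\ref{fig:Legendrian_basic} and reads off $tb$). One small correction to your bookkeeping: the two strands passing through the twist box are the antiparallel edges of the ribbon band, so each right-handed full twist contributes two \emph{negative} crossings (writhe $-2$) with the cusp count unchanged, which is what actually drives $tb$ down by $2$ per twist --- the ``two positive crossings'' you describe would push $tb$ in the wrong direction.
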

\begin{proof}It is not difficult to see that $K_m$ $(m\geq 0)$ has the Legendrian representative in Figure~\ref{fig:Legendrian_basic} (ignore dots). One can check that the set of dots in this diagram is a ruling. Thus Theorem~\ref{thm:Rutherford} shows that $\overline{tb}(K_m)$ is realized by this Legendrian representative. Calculating the Thurston-Bennequin number, we obtain the claim. 
\end{proof}

\begin{figure}[h!]
\begin{center}
\includegraphics[width=1.4in]{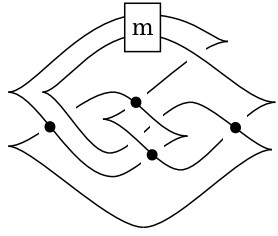}
\caption{A Legendrian representative of $K_m$ $(m\geq 0)$. The set of dots is a ruling. }
\label{fig:Legendrian_basic}
\end{center}
\end{figure}

Next we give a cabling formula of the maximal Thurston-Bennequin number of a knot, relying on the Rutherford's theorem. This might be of independent interest. 
\begin{proposition}\label{prop:cabling}Assume that a knot $K$ in $S^3$ admits a Legendrian representative with a ruling. Then for any relatively prime integers $p,q$ with $p\geq 2$ and $q\geq \overline{tb}({K})p+p$, the $(p,q)$-cable $C_{p,q}({K})$ of ${K}$ admits a Legendrian representative with a ruling and satisfies 
\begin{equation*}
\overline{tb}(C_{p,q}({K}))=\overline{tb}({K})p^2+(q-\overline{tb}({K})p)(p-1). 
\end{equation*}
\end{proposition}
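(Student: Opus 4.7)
The plan is to construct an explicit Legendrian representative $\mathcal{C}$ of $C_{p,q}(K)$ that admits a ruling and realizes the claimed Thurston-Bennequin number, and then appeal to Theorem~\ref{thm:Rutherford} to conclude $\overline{tb}(C_{p,q}(K)) = tb(\mathcal{C})$. Set $t := \overline{tb}(K)$ and fix a Legendrian representative $\mathcal{K}$ of $K$ with $tb(\mathcal{K}) = t$ and a ruling $\Lambda$, which exist by hypothesis.

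The construction of $\mathcal{C}$ is the standard Legendrian $(p,q)$-cable by satellite. On the front of $\mathcal{K}$, I form the Legendrian $p$-copy by taking $p$ parallel pushoffs in the $z$-direction; in this drawing every crossing of $\mathcal{K}$ is replaced by $p^2$ crossings of the same sign, every cusp by $p$ nested cusps, and near each cusp the nested pushoffs develop $\binom{p}{2}$ additional crossings which are exactly what force the pairwise linking number to equal $t$ (rather than $w_\mathcal{K}$). Next, on a short horizontal arc of the $p$-copy disjoint from all cusps and crossings, I splice in the positive Legendrian torus braid $(\sigma_1\sigma_2\cdots\sigma_{p-1})^{\,q-tp}$. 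The hypothesis $q\geq tp+p$ guarantees $q-tp\geq p>0$, so the braid has $(q-tp)(p-1)$ positive crossings and, using $\gcd(p,q)=1$, cyclically reassembles the $p$-copy into a single Legendrian knot $\mathcal{C}$ of smooth type $C_{p,q}(K)$. A direct count with the front formula $tb = \text{(writhe)} - \tfrac12\,\text{(cusps)}$ and $t = w_\mathcal{K} - c_\mathcal{K}/2$ gives
\[
tb(\mathcal{C}) = p^{2}w_\mathcal{K} - \tfrac{1}{2}p(p-1)c_\mathcal{K} + (q-tp)(p-1) - \tfrac{1}{2}p\,c_\mathcal{K} = p^{2}t + (q-tp)(p-1),
\]
matching the claimed value, so $\overline{tb}(C_{p,q}(K)) \geq p^{2}t + (q-tp)(p-1)$.

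The remaining task is to exhibit a ruling $\Lambda'$ of $\mathcal{C}$. The natural candidate consists of (a) the $p^{2}$ crossings of $\mathcal{C}$ lying over each $\lambda\in\Lambda$, (b) all the cusp-adjacent crossings arising among nested pushoffs in the $p$-copy, and (c) the crossings of a standard ruling of the positive $(p,q-tp)$-torus braid in the splice region. The main obstacle is verifying the three-case normality condition at every element of $\Lambda'$: for the crossings in (a) this reduces to the normality of $\Lambda$ once one carefully tracks the pushoff index of each strand and uses that $p$ parallel pushoffs preserve the relative $z$-ordering away from cusps; for those in (b) and (c) it is a direct check using the well-known rulability of a positive torus braid together with the nested structure of pushoffs at each cusp. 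The positivity of the spliced braid---precisely what the inequality $q\geq tp+p$ ensures---is what makes case (c) of the verification go through. With $\Lambda'$ confirmed as a ruling, Theorem~\ref{thm:Rutherford} gives $\overline{tb}(C_{p,q}(K)) = tb(\mathcal{C}) = p^{2}t + (q-tp)(p-1)$, completing the proof.
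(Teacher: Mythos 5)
Your overall strategy coincides with the paper's: build the standard Legendrian $(p,q)$-cable from the $p$-copy of a $tb$-maximizing ruled front by inserting positive twists, exhibit a ruling, and invoke Theorem~\ref{thm:Rutherford}; your computation of $tb(\mathcal{C})$ is correct. The gap is in the only hard step --- verifying that your candidate $\Lambda'$ is a normal ruling --- and I do not think the candidate itself is right. In (a) you propose to smooth \emph{all} $p^2$ crossings lying over a switch $\lambda\in\Lambda$. The ruling smoothing preserves the vertical order of strands, so resolving the entire $p\times p$ grid at $\lambda$ connects the $p$ incoming upper strands to the $p$ outgoing upper strands: this mixes the $p$ pushoff copies rather than preserving them, so the components of the smoothed diagram are not organized by ``pushoff index'' and normality does not reduce to the normality of $\Lambda$ as you assert. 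The paper instead smooths only the $p$ diagonal crossings (the self-crossing of the $i$-th copy over $\lambda$, for each $i$), keeps every inter-copy crossing --- both those over crossings of $D$ and the $\binom{p}{2}$ near each cusp --- as a non-switch, and takes as the remaining switches \emph{all} $(p-1)r$ crossings of a twist box with $r=q-\overline{tb}(K)p-p$ inserted at two fixed cusps; smoothing the twist box first disconnects $D_{p,q}$ into $p$ parallel copies of $D$, after which the ruling condition literally reduces to $\Lambda$ being a ruling of each copy.

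Second, your explanation of where the hypothesis $q\geq \overline{tb}(K)p+p$ enters (``positivity of the spliced braid'') cannot be correct: the braid $(\sigma_1\cdots\sigma_{p-1})^{q-\overline{tb}(K)p}$ is already positive for every $q>\overline{tb}(K)p$, yet the remark following the proposition shows the formula is genuinely false at $q=\overline{tb}(K)p+p-1$ (take $K$ the unknot and $q=-1$; then $C_{p,-1}(K)$ is the unknot and the formula gives $-2p+1\neq -1$). By Theorem~\ref{thm:Rutherford} the front your recipe produces for such $q$ admits no normal ruling at all, even though the spliced braid is positive; so the normality verification must break somewhere in that range, and your argument neither locates the failure nor shows it disappears once $q\geq \overline{tb}(K)p+p$. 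In the paper the hypothesis is used exactly as $r=q-\overline{tb}(K)p-p\geq 0$: the modification at the two fixed cusps already accounts for $p(p-1)$ of the $(q-\overline{tb}(K)p)(p-1)$ extra crossings, and only the remaining $r$ full twists can be placed in a box all of whose crossings are taken as switches. As written, the proposal defers the entire three-case normality check --- which is where the content of the proposition lives --- to ``a direct check,'' so the proof is incomplete.
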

\begin{proof}Let $\Lambda$ be a ruling of a front diagram $D$ of a Legendrian representative $\mathcal{K}$ of $K$. Let $D_p$ be the front diagram  consisting of $p$ copies of $D$ which are obtained by slightly moving $D$ to the $z$-direction. Figure~\ref{fig:copy_front} describes its local pictures near cusps and a crossing point of $D$. We denote the ruling $\Lambda$ in the $i$-th copy of $D$ by $\Lambda_i$ $(1\leq i\leq p)$. We fix one left cusp and one right cusp of $D$, and we modify $D_p$ near only these two cusps as shown in the upper side of Figure~\ref{fig:cusp_modification}. Here $r=q-tb(\mathcal{K})p-p$, and the box $\frac{r}{p}$ denotes the twists shown in the lower side of the figure. We denote the resulting front diagram by $D_{p,q}$, and let $\Gamma$ be the set of $(p-1)r$ crossing points in the  box $\frac{r}{p}$. One can see that $D_{p,q}$ gives a Legendrian representative $\mathcal{L}$ of the cable $C_{p,q}({K})$. 

Now let $\Phi$ be the subset of crossing points of $D_{p,q}$ defined by 
\begin{equation*}
\Phi=\Gamma\cup\Lambda_1\cup \Lambda_2\cup \dots \cup \Lambda_p. 
\end{equation*}
We show that $\Phi$ is a ruling of $D_{p,q}$. Since the local picture of $D_{p,q}$ around the fixed left cusps in Figure~\ref{fig:cusp_modification} does not contain any element of $\cup_{i=1}^p\Lambda_i$, we can easily check that each point of $\Gamma\subset D_{p,q}$ satisfies the condition of a ruling with respect to $\Phi$. We here consider the front diagram obtained from $D_{p,q}$ by smoothing all the crossing points in $\Gamma$. This diagram clearly has $p$ knot components each of which is isotopic to $D$. Therefore we see that $\Phi$ is a ruling of $D_{p,q}$, since each $\Lambda_i$ is a ruling of the $i$-th copy of $D$. Hence Theorem~\ref{thm:Rutherford} shows $\overline{tb}(C_{p,q}({K}))={tb}(\mathcal{L})$. Calculating ${tb}(\mathcal{L})$, we obtain the claim. 
\end{proof}
\begin{figure}[h!]
\begin{center}
\includegraphics[width=4.4in]{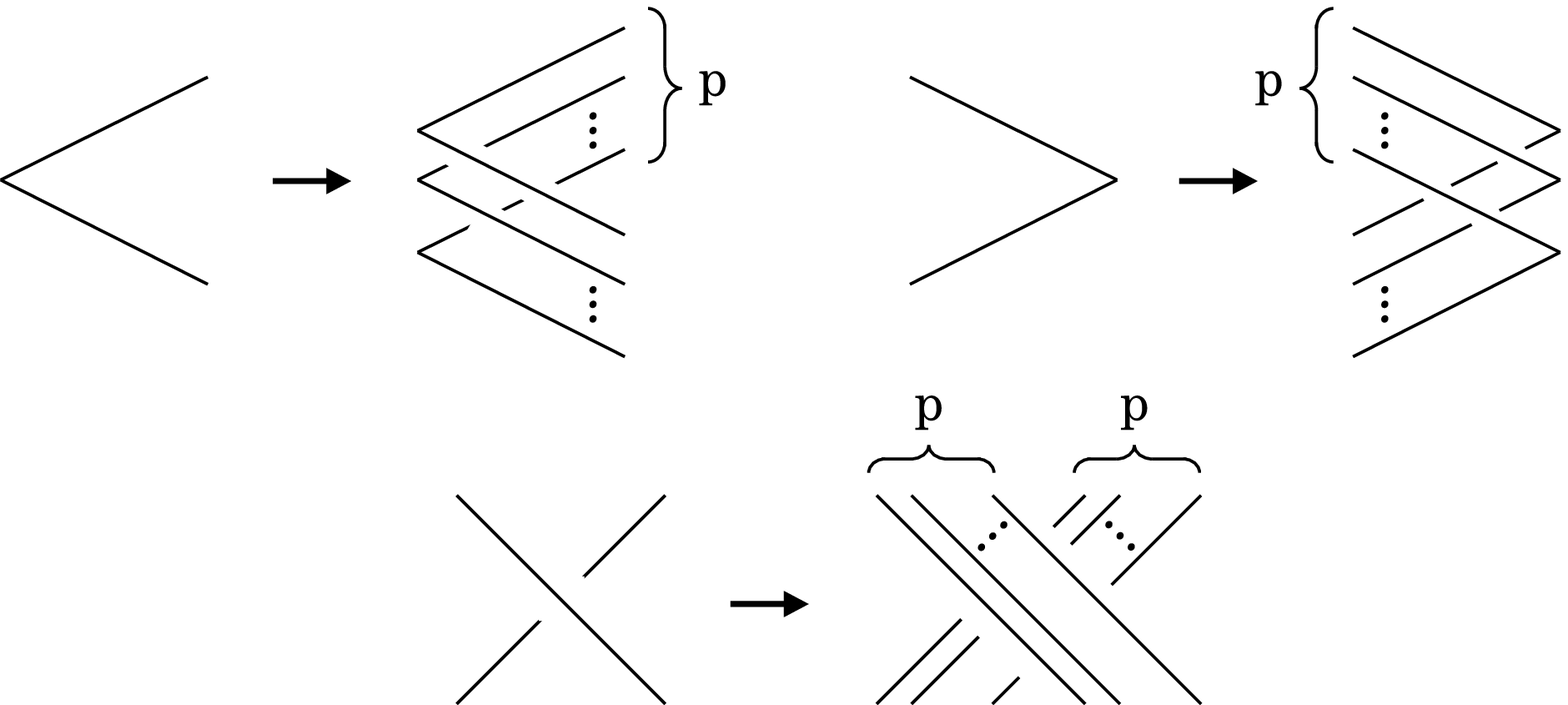}
\caption{}
\label{fig:copy_front}
\end{center}
\end{figure}
\begin{figure}[h!]
\begin{center}
\includegraphics[width=4.5in]{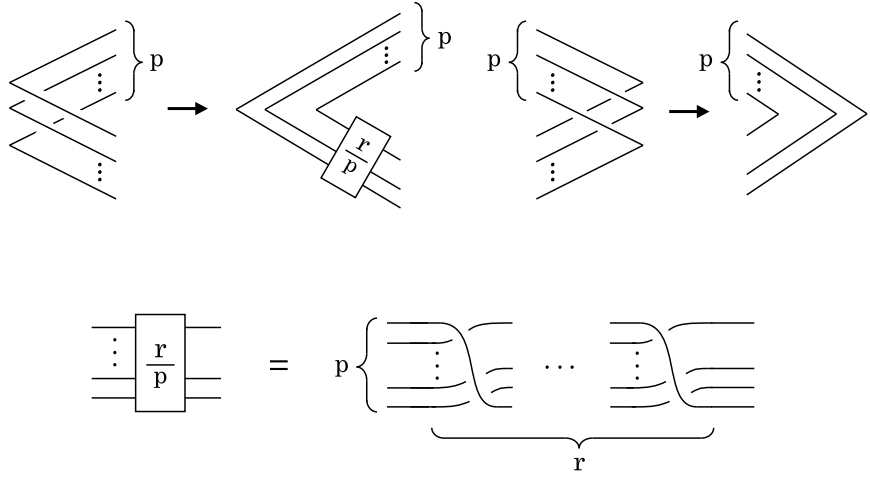}
\caption{Modification of fixed cusps ($r=q-tb(\mathcal{K})p-p\geq 0$)}
\label{fig:cusp_modification}
\end{center}
\end{figure}

\begin{remark}$(1)$ More generally, a similar formula must holds for satellite knots under an appropriate condition on patterns and companions. We do not pursue this point here. \smallskip\\
$(2)$ The same formula also holds if $C_{p,q}(\mathcal{K})$ is a link (i.e.\ $p,q$ are not relatively prime). This is because the aforementioned theorem of Rutherford also holds for links (\cite{Ru}). \smallskip\\
$(3)$ This formula does not hold for $q\leq \overline{tb}({K})p+p-1$. For example, if ${K}$ is unknot, then the formula does not hold for $q=\overline{tb}({K})p+p-1=-1$. 
\end{remark}
By this proposition and Lemma~\ref{lem:ruling_K_m}, we can clearly determine the value of $\overline{tb}(K_{m,n})$. 

\begin{proposition}\label{prop:tb(K_m,n)}For integers $m\geq 0$ and $n\geq 1$, 
\begin{equation*}
\overline{tb}(K_{m,n})=-2mn-3n+1.
\end{equation*}
\end{proposition}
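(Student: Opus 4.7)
The proof should be essentially a direct computation: $K_{m,n}$ is by definition the $(n,-1)$-cable $C_{n,-1}(K_m)$, and Lemma~\ref{lem:ruling_K_m} tells us both that $\overline{tb}(K_m) = -2m-2$ and that $K_m$ admits a Legendrian representative with a ruling. So the plan is to feed this data into Proposition~\ref{prop:cabling} with $p = n$ and $q = -1$ and simplify.

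Before quoting the cabling formula, I need to verify its hypotheses. The condition $p \geq 2$ means I must assume $n \geq 2$. The condition $q \geq \overline{tb}(K_m)p + p$ reads $-1 \geq (-2m-2)n + n = -(2m+1)n$, i.e., $(2m+1)n \geq 1$; this holds for every $m \geq 0$ and $n \geq 1$ since $2m+1 \geq 1$. So for $n \geq 2$ the hypotheses are satisfied and the proposition gives
\begin{equation*}
\overline{tb}(K_{m,n}) = (-2m-2)n^2 + \bigl(-1 - (-2m-2)n\bigr)(n-1),
\end{equation*}
which after expansion (the $(-2m-2)n^2$ term cancels a term from $(2m+2)n(n-1)$) collapses to $-2mn - 3n + 1$, exactly the claimed value.

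The remaining case is $n = 1$, for which Proposition~\ref{prop:cabling} does not apply. Here I would note that the $(1,-1)$-cable $C_{1,-1}(K_m)$ is, as a smooth knot in $S^3$, isotopic to $K_m$ itself: any simple closed curve on $\partial\nu(K_m)$ representing $[K'_m] - [\alpha]$ is a section of the fibration $\partial\nu(K_m) \to K_m$ and is thus isotopic to the core $K_m$ in $S^3$ (only the induced framing changes, but $\overline{tb}$ is a knot invariant). Therefore $\overline{tb}(K_{m,1}) = \overline{tb}(K_m) = -2m-2$, which agrees with the formula $-2m(1) - 3(1) + 1$.

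There is no serious obstacle here; the proposition being used does all the real work. The only things to be careful about are (a) confirming the hypothesis $q \geq \overline{tb}(K_m)p + p$, which is what forces the Legendrian ruling constructed in the proof of Proposition~\ref{prop:cabling} to have the correct number of twists $r = q - tb(\mathcal{K})p - p \geq 0$, and (b) handling the degenerate $n=1$ case by recognizing the $(1,-1)$-cable as an isotopic copy of the companion.
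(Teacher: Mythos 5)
Your proposal is correct and follows exactly the paper's route: the paper derives this proposition by feeding Lemma~\ref{lem:ruling_K_m} into Proposition~\ref{prop:cabling} with $p=n$, $q=-1$, and your algebra and verification of the hypothesis $q \geq \overline{tb}(K_m)p+p$ match. You are in fact slightly more careful than the paper, which does not explicitly address the $n=1$ case excluded by the hypothesis $p\geq 2$; your observation that $C_{1,-1}(K_m)$ is smoothly isotopic to $K_m$ cleanly fills that gap (and that case is not needed for the main theorem, which only uses $n\geq 2$).
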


\begin{remark}$(1)$ This proposition shows that $K_{m,n}$ is not isotopic to $K_{m',n}$ for $m > m'\geq 0$ and $n\geq 1$. \smallskip\\
$(2)$ In the $m<0$ case, the value of $\overline{tb}(K_{m,n})$ behaves very differently. Indeed, in \cite{Y9}, we prove $\overline{tb}(K_{m,n})=-1$ for $n\geq 2$ and $m\leq -4n+3$. (This result implies the existence of reducible Legendrian surgeries, disproving a conjecture in \cite{LSi}.) To show this, we construct a very complicated Legendrian representative, unlike the representative given by Proposition~\ref{prop:cabling}. 
\end{remark}

Now we are ready to prove our main result. 
\begin{proof}[Proof of Theorem~\ref{intro:thm:example}]Let $m,n$ be integers with $m\geq 0$ and $n\geq 2$. We consider the knot $K_{m,n}$ with $-n$-framing. 
By Lemma~\ref{lem:cancel}, the 4-manifold represented by this framed knot is diffeomorphic to $Y_{n,m+4n}$, which is the boundary connected sum of the contractible 4-manifold $X_{n,m+4n}$ and a compact Stein 4-manifold. Lemma~\ref{lem:Stein fillable} shows $\partial X_{n,m+4n}$ is Stein fillable, and Proposition~\ref{prop:tb(K_m,n)} shows $-n> \overline{tb}(K_{m,n})+m$. Therefore, the set 
\begin{equation*}
\{\text{$-n$-framed $K_{m,n}$}\mid m\geq 0,\; n\geq 2\}
\end{equation*}
 is an infinite family of framed knots satisfying the conditions of Theorem~\ref{intro:thm:example}. 
\end{proof}
%%%%%%%%%%%%%%%%%%%%%%5
%%%%%%%%%%%%%%%%%%%%%%%
\begin{proof}[Proof of Theorem~\ref{thm:main}]

We recall that the infinite family  
\begin{equation*}
\{\text{$-n$-framed $K_{m,n}$}\mid m\geq 0,\; n\geq 2\}
\end{equation*}
of framed knots satisfy the conditions of Theorem~\ref{intro:thm:example}, and that the 4-manifold represented by $-n$-framed $K_{m,n}$ is diffeomorphic to the boundary connected sum of the contractible 4-manifold $X_{n,m+4n}$ with Stein fillable boundary and a compact Stein 4-manifold. 

We first assume that Problem~\ref{intro:prob:contra_fillable} has an affirmative answer. 
Since the boundary connected sum of Stein handlebodies is a Stein handlebody (cf.\ \cite{GS}), the boundary connected sum of any compact Stein 4-manifolds admits a Stein structure. Therefore, the conditions of Theorem~\ref{intro:thm:example} and the assumption show that the above infinite family gives an infinite family of counterexamples to Problem~\ref{intro:prob:tb}.

We next assume that Problem~\ref{intro:prob:tb} has an affirmative answer. Then the boundary connected sum of $X_{n,m+4n}$ and a compact Stein 4-manifold admits no Stein structure for $n\geq 2$ and $m\geq 0$, since this 4-manifold is represented by $-n$-framed $K_{m,n}$, and its framing $-n$ is larger than $\overline{tb}(K_{m,n})$. This implies that the contractible 4-manifold $X_{n,m+4n}$ admits no Stein structure for $n\geq 2$ and $m\geq 0$. Therefore by Lemma~\ref{lem:Casson}, the set 
\begin{equation*}
\{X_{n,m+4n}\mid n\geq 2, \; m\geq 0\}
\end{equation*}
 gives an infinite family of counterexamples to Problem~\ref{intro:prob:contra_fillable}. 
\end{proof}

We propose potential counterexamples to Problem~\ref{intro:prob:contra_fillable}. 

\begin{conjecture}\label{conj:non-Stein}The compact contractible oriented smooth 4-manifold $X_{n,k}$ with Stein fillable boundary does not admit any Stein structure for $n\geq 2$ and $k\geq 4n$. 
\end{conjecture}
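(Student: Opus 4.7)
The natural strategy, within the framework this paper sets up, is to try to confirm the Thurston--Bennequin conjecture (Problem~\ref{intro:prob:tb}) for the specific framed knots $(-n)$-framed $K_{m,n}$ with $m=k-4n\geq 0$ and $n\geq 2$. By Lemma~\ref{lem:cancel} and Proposition~\ref{prop:tb(K_m,n)}, the framing $-n$ of the knot $K_{m,n}$ representing $Y_{n,k}$ strictly exceeds $\overline{tb}(K_{m,n})$. Since the $(-n)$-disk bundle over $S^2$ (for $n\geq 2$) is Stein and boundary connected sums of Stein handlebodies are Stein, if $X_{n,k}$ were Stein then so would $Y_{n,k}=X_{n,k}\natural(\text{disk bundle})$ be, and a negative answer to Problem~\ref{intro:prob:tb} for this particular framed knot would yield the desired contradiction. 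So the first key step is to rule out Stein structures on the 4-manifold represented by $(-n)$-framed $K_{m,n}$.

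The concrete attack on this step is via Eliashberg's characterization combined with the Akbulut--Ozbagci/Loi--Piergallini theorem: any hypothetical Stein structure on $Y_{n,k}$ realizes it as a positive allowable Lefschetz fibration over $D^2$, or equivalently admits some Stein handle decomposition in which every 2-handle is attached along a Legendrian knot with framing one less than its $\mathrm{tb}$. One would need to show that no such decomposition of this particular 4-manifold presents a 2-handle along a Legendrian realization of $K_{m,n}$ with $\mathrm{tb}\geq -n+1$, the genuine subtlety being that different handle decompositions of the same smooth 4-manifold can, a priori, produce wholly different Legendrian attaching data. A useful input here is the cabling structure: since $K_{m,n}=C_{n,-1}(K_m)$ and $K_m$ has a ruling, Rutherford's theorem and the Kauffman bound should propagate under cabling to constrain every Legendrian realization, not merely the distinguished one built in Proposition~\ref{prop:cabling}.

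A complementary approach goes through the boundary. Any Stein structure on $X_{n,k}$ induces a Stein-fillable contact structure $\xi$ on the irreducible integral homology sphere $\partial X_{n,k}$ with non-vanishing Ozsv\'ath--Szab\'o contact invariant $c(\xi)$. One would classify tight contact structures on $\partial X_{n,k}$ via Honda--Etnyre bypass technology, and for each candidate rule out a contractible filling using Heegaard Floer $d$-invariants, the homotopy-theoretic information carried by $c(\xi)$, or instanton/representation-variety obstructions leveraging the non-zero Casson invariant $\lambda(\partial X_{n,k})=-2n$ from Lemma~\ref{lem:Casson}. The main obstacle throughout is precisely the contractibility hypothesis: it trivializes the usual Seiberg--Witten, signature, and intersection-form obstructions, so every usable obstruction must come entirely from the boundary contact structure. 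Sharpening these boundary obstructions enough to distinguish the non-existence of a filling with the very specific topology of $X_{n,k}$ from the known existence of some Stein filling produced in Lemma~\ref{lem:Stein fillable}(2) is the genuine difficulty, and is precisely why the statement is proposed as a conjecture rather than proved.
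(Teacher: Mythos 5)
This statement is labeled a \emph{conjecture} in the paper, and the paper offers no proof of it; the only thing the paper establishes is the conditional reduction stated in the remark immediately after it, namely that an affirmative answer to Problem~\ref{intro:prob:tb} for the $(-n)$-framed knots $K_{k-4n,n}$ would imply the conjecture (via Lemma~\ref{lem:cancel}, Proposition~\ref{prop:tb(K_m,n)}, and the fact that a boundary connected summand of a Stein domain represented by a single framed knot would itself have to be Stein). Your first paragraph reproduces exactly this reduction, and you correctly stop short of claiming a proof, so there is no false step of substance to flag. One small logical slip: you write that ``a negative answer to Problem~\ref{intro:prob:tb} for this particular framed knot would yield the desired contradiction,'' but it is the \emph{affirmative} answer for this framed knot that does the work --- assuming $X_{n,k}$ is Stein makes $Y_{n,k}$ Stein with framing $-n>\overline{tb}(K_{m,n})$, which contradicts the affirmative answer; a negative answer would instead be consistent with $X_{n,k}$ being Stein.

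Your second and third paragraphs (ruling out all Stein handle decompositions via Lefschetz fibrations, or obstructing contractible Stein fillings of $\partial X_{n,k}$ through contact invariants, $d$-invariants, or the Casson invariant) go beyond anything in the paper and are not carried out; they are reasonable research directions, but none of them is close to a proof, and you acknowledge as much. In particular, the boundary approach faces exactly the obstacle you name: Lemma~\ref{lem:Stein fillable}(2) shows $\partial X_{n,k}$ \emph{does} bound a Stein domain, so any obstruction must be sensitive to the topology of the filling and not just to fillability of the boundary, and no currently known invariant in the paper's toolkit achieves this. In short, your submission is an accurate assessment of the problem's status rather than a proof, which is consistent with the fact that the statement remains open in the paper itself.
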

We remark that, if the answer to Problem~\ref{intro:prob:tb} is affirmative for $-n$-framed $K_{k-4n,n}$ ($n\geq 2$ and $k\geq 4n$), then $X_{n,k}$ does not admit any Stein structure (see the proof of Theorem~\ref{thm:main}). 
\begin{remark}Lemma~\ref{lem:Stein fillable} implies that $Y_{n,k}$ admits a Stein structure for $n\geq 2$ and $k\leq 2n-1$. Since Lemma~\ref{lem:cancel} tells that $Y_{n,k}$ is represented by $-n$-framed $K_{k-4n,n}$, it is natural to ask if the framing $-n$ is less than $\overline{tb}(K_{k-4n,n})$ for $n\geq 2$ and $k\leq 2n-1$. Contrary to the intuition coming from the standard cabling construction of a Legendrian representative, we  solve this question affirmatively in \cite{Y9}, giving a supporting evidence for Problem~\ref{intro:prob:tb}. 
\end{remark}

\begin{remark}\label{rem:SPC4}For the reader's convenience, we show that every homotopy $S^4$ is diffeomorphic to $S^4$ if and only if every compact contractible oriented smooth 4-manifold with $S^3$ boundary admits a Stein structure. This fact is well known to experts.%, but the author could not find any reference.

First assume the former statement is true, and let $C$ be a compact contractible oriented smooth 4-manifold whose boundary is diffeomorphic to $S^3$. Then the Freedman's theorem~\cite{Fr} tells that the 4-manifold $Z$ obtained from $C$ by attaching $D^4$ is homeomorphic to $S^4$. Hence $Z$ is diffeomorphic to $S^4$ by the assumption. Since an embedding of $D^4$ is unique, the complement $C\cong S^4-D^4$ is diffeomorphic to $D^4$. Thus $C$ admits a Stein structure. 

Next assume the latter statement is true, and let $Z$ be a smooth 4-manifold which is homotopic to $S^4$. Then the compact oriented 4-manifold $Z-D^4$ is simply connected and has the same homology group as that of one point. Hence, according to Whitehead's theorem, $Z-D^4$ is contractible. Thus $Z-D^4$ admits a Stein structure by the assumption. According to Eliashberg's theorems~\cite{E1, E3}, any Stein filling of $S^3$ is diffeomorphic to $D^4$. Hence $Z-D^4$ is diffeomorphic to $D^4$. Since attaching $D^4$ is unique, $Z=(Z-D^4)\cup D^4$ is diffeomorphic to $S^4$. 
\end{remark}
%\noindent\textbf{Acknowledgements}. 
\section*{Acknowledgements}
The author would like to thank Selman Akbulut for fruitful conversations. This work was mainly done during the author's stay at Michigan State University from August 2014 to February 2015. He would like to thank them for their hospitality. He also thanks the referee for his/her helpful comments.
%%%%%%%%%%%%%%%%%%%%%%%
%%%%%%%%%%%%%%%%%%%%%%%%%%

\end{document}